\documentclass[11pt,a4paper,aps]{article}
\usepackage{etex}
\usepackage{inputenc}
\usepackage[T1]{fontenc}
\usepackage{lmodern}					
\usepackage{textcomp}				
\usepackage{verbatim}				
\usepackage[english]{babel}
\usepackage[toc,page]{appendix} 		
\usepackage{cite}
\bibliographystyle{plainyr}
\usepackage{amsmath,					
			amsthm,
			amsfonts,
			amssymb,
			mathtools}
\usepackage{enumitem} 

\usepackage[all]{xy}
\usepackage{tikz-cd} 
\usetikzlibrary{tikzmark}
\usetikzlibrary{arrows}

\usepackage{graphicx}				
\usepackage{tabularx}				

\usepackage{authblk}   

\definecolor{pur}{RGB}{255,102,102}

\usepackage[hmargin=2.2cm,vmargin=2.5cm]{geometry}		
\setlength{\textwidth}{455pt}
\setlength{\textheight}{686pt} 
\setlength{\parskip}{1ex plus0.2ex minus0.2ex}
\setlength{\parindent}{0.6cm}
\numberwithin{equation}{section}

\usepackage[colorlinks]{hyperref}			

\newcommand{\dd}{\mathrm{d}}


\newtheorem{definition}{Definition}
\newtheorem*{definition*}{Definition}
\newtheorem{theoreme}{Theorem}
\newtheorem*{theoreme*}{Theorem}

\newtheorem*{lemme*}{Lemma}

\newtheorem*{conjecture*}{Conjecture}

\theoremstyle{remark}
\newtheorem*{remarque}{Remark}
\newtheorem{example}{Example}

\newtheorem*{prooftheo*}{Proof of the Theorem}




\title{A short guide through integration theorems of generalized distributions}

\author[]{Sylvain Lavau\thanks{sylvain.lavau@ens-lyon.fr}}
\affil[]{{\small \textit{Max Planck Institute for Mathematics}, Bonn, Germany.}}
\date{}

\begin{document}

\maketitle


\abstract{The generalization of Frobenius' theorem to foliations with singularities is usually attributed to Stefan and Sussmann, for their simultaneous discovery around 1973. However, their result is often referred to without caring much on the precise statement, as some sort of magic spell. This may be explained by the fact that the literature is not consensual on a unique formulation of the theorem, and because the history of the research leading to this result has been flawed by many claims that turned to be refuted some years later. This, together with the difficulty of doing proof-reading on this topic, brought much confusion about the precise statement of Stefan-Sussmann's theorem. 
This paper is dedicated to bring some light on this subject, by investigating the different statements and arguments that were put forward in geometric control theory between 1962 and 1994 regarding the problem of integrability of generalized distributions. We will present the genealogy of the main ideas and show that  many mathematicians that were involved in this field made some mistakes that were successfully refuted. Moreover, we want to address the prominent influence of Hermann on this topic, as well as the fact that some statements of Stefan and Sussmann  turned out to be wrong. 
In this paper, 
we intend to provide the reader with a deeper understanding of the problem of integrability of generalized distributions, and to reduce the confusion surrounding these difficult questions.}


\bigskip
\noindent
\emph{Key words: singular foliations, generalized distributions, control theory.}

\bigskip
\noindent {AMS 2010 Classification: 37C10, 53C12, 57R27, 58A30}

\section{Introduction}

Foliation theory is the study of foliations on manifolds. A \emph{foliation} on a manifold $M$ is a partition of $M$ into connected immersed submanifolds, that are called \emph{leaves}. A foliation is called \emph{regular} if the leaves have the same dimension, and \emph{singular} otherwise. Over every point $x\in M$, the tangent space of the leaf $L_x$ through $x$ is a subspace of the tangent space of $M$. The data of a subspace $\mathcal{D}_x$ of $T_xM$ at every point $x\in M$ define what is called a \emph{distribution}  $\mathcal{D}=\bigcup_{x\in M}\mathcal{D}_x$ on $M$. Notice that a distribution is not necessarily a sub-bundle of $TM$ because it may not have constant rank. 
For example, for a regular foliation, since the leaves have the same dimension, the induced distribution formed by the tangent spaces at every point has constant rank over $M$. In the singular case however, the dimension of the tangent spaces to the leaves may vary from leaf to leaf. Since the tangent spaces to a given foliation form a distribution $\mathcal{D}$ on $M$, and since the space of vector fields tangent to the leaves are closed under Lie bracket, then $\mathcal{D}$ inherits the Lie bracket of vector fields. More precisely, we say that a distribution $\mathcal{D}$ is \emph{involutive} if for every two sections $X,Y$ of $\mathcal{D}$, the commutator $[X,Y]$ is a section of $\mathcal{D}$ as well. On the other hand, 
a given distribution $\mathcal{D}$ may not come from the tangent spaces of a foliation. Then, we say that $\mathcal{D}$ is \emph{integrable} if there exists a foliation such that each leaf $L$ satisfies $T_xL=\mathcal{D}_x$ for every $x\in L$. A legitimate question is thus: `Given a distribution on $M$, what are the conditions under which it is integrable to a foliation?'

This question is a modern formulation of a set of results and investigations that were related to -- but not directly concerned with -- the topic of integrating distributions into foliations.
Originally, the problem emerged as finding the solutions of non-linear first-order partial differential equations, and was pioneered by Lagrange that provided a method for systems involving up to two independent variables. It was then formalized to an arbitrary number of variables by Pfaff in his memoir at the University of Berlin in 1815, hence the name of \emph{Pfaffian systems} \cite{Hawkins}. He showed how one may transform a set of $n$ first-order non-linear partial differential equations into a set of $2n$ ordinary linear differential equations. The  simplification method presented by Pfaff could be seen as finding a submanifold of the space of variables on which some specific one-form vanishes. The problem was that Pfaff could not make precise what were the conditions under which one could use this simplification. This question -- designated as the \emph{problem of Pfaff} -- led to multiple investigations that finally found an accurate answer by Frobenius in 1877. 

Actually, the name `Frobenius' theorem' comes from Cartan in 1922 because Frobenius' result had an tremendous influence on Cartan's calculus of differential forms. Frobenius' paper is actually archetypal of the production of the Berlin school of Mathematics at that time, which promoted the idea that a clear, rigorous and systematic presentation of the arguments was just as important as the discovery of new results by whatever means. Frobenius and his contemporaries in Berlin participated in a shift of paradigm in modern mathematics by improving standards of rigor and presentation \cite{Hawkins}. This is in part the reason why Frobenius is remembered for this theorem, whereas the work of his predecessors has been forgotten.

Indeed, it turns out that Frobenius' theorem is actually an algebraic reformulation of a result published in 1840 by Deahna \cite{Deahna}, who then became a teacher in a secondary school (that was common at the time) before his premature death at age 28 in 1844. Deahna's work did not gain much interest, and it was later Clebsch in 1861, editing a posthumous article of Jacobi, who improved Pfaff's argument \cite{Clebsch}. Even if the problem of solving Pfaffian systems had been around for many years, it was the article of Clebsch which motivated the interest of Frobenius on this question. Simultaneously, unaware of Clebsch's investigations, Natani proposed another approach to the question of solving Pfaffian systems, but the relationship with Clebsch's work was not realized before a few years \cite{Hawkins}. The modern formulation of Frobenius' theorem does not correspond to the one appearing in its original paper \cite{Frobenius}, because it has been modified to fit with modern-day standards and conventions:
\begin{theoreme}\emph{\textbf{Frobenius (1877)}}
Let  $M$ be a smooth manifold and let $\mathcal{D}$ be a smooth distribution of constant rank on $M$. Then $\mathcal{D}$ is integrable into a regular foliation if and only if $\mathcal{D}$ is involutive.
\end{theoreme}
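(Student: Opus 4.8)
The plan is to prove the two implications separately; the forward direction is routine, while the converse contains all the geometric content.

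First I would dispatch the easy direction, that \emph{integrability implies involutivity}. Assume $\mathcal{D}$ is integrable, so that through each point passes a leaf $L$ with $T_xL = \mathcal{D}_x$. Given two local sections $X,Y$ of $\mathcal{D}$, they are everywhere tangent to the leaves and hence restrict to genuine vector fields on each (immersed) leaf $L$. Since the Lie bracket of two vector fields tangent to a submanifold is again tangent to it, one gets $[X,Y]_x \in T_xL = \mathcal{D}_x$ at every point, so $\mathcal{D}$ is involutive. The only subtlety is that restriction of sections along the immersion is compatible with the bracket, which follows from the naturality of the Lie bracket under smooth maps.

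The converse -- that \emph{involutivity implies integrability} -- is the substantial part, and I would split it into a local normal-form result followed by a global gluing. For the local stage, fix $x_0 \in M$ and let $k$ be the constant rank. The key reduction is to produce near $x_0$ a local frame of \emph{commuting} vector fields spanning $\mathcal{D}$. Picking any local frame $X_1,\dots,X_k$ of $\mathcal{D}$ and coordinates $(x^1,\dots,x^n)$ in which (after relabeling) the projection of $\mathcal{D}_{x_0}$ onto $\mathrm{span}(\partial_{x^1},\dots,\partial_{x^k})$ is an isomorphism, this persists on a neighborhood by continuity, and inverting the corresponding $k\times k$ matrix of components yields a frame of triangular form $Y_i = \partial_{x^i} + \sum_{a>k} b_i^a\,\partial_{x^a}$. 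Computing $[Y_i,Y_j]$, the components along $\partial_{x^1},\dots,\partial_{x^k}$ vanish identically by construction; but involutivity forces $[Y_i,Y_j]$ to be a section of $\mathcal{D}$, hence a combination $\sum_\ell c^\ell Y_\ell$, whose first $k$ components are exactly $(c^1,\dots,c^k)$. Since these vanish, all $c^\ell=0$, so $[Y_i,Y_j]=0$. I would then invoke the simultaneous straightening theorem for commuting, pointwise independent vector fields to obtain coordinates $(u^1,\dots,u^n)$ near $x_0$ with $Y_i=\partial_{u^i}$ for $i\le k$; in these coordinates the slices $\{u^{k+1}=c_{k+1},\dots,u^n=c_n\}$ are integral submanifolds of $\mathcal{D}$, the \emph{plaques}.

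The hard part is the global gluing: assembling the plaques of all such charts into maximal connected integral manifolds (the leaves), and checking that these partition $M$ and are immersed submanifolds with tangent space $\mathcal{D}$. I would endow $M$ with the finer topology generated by all plaques, verify that plaques of overlapping charts agree on intersections so that the plaque structure is consistent, and define leaves as the connected components in this topology. One then equips each leaf with its unique smooth immersed-submanifold structure, using second countability of $M$ to ensure that each leaf is covered by an at-most-countable family of plaques. This point-set argument -- rather than the analysis -- is where the genuine work lies; the existence and smooth dependence of flows underlying the straightening theorem may be taken as a standard input from ODE theory.
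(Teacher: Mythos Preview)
Your proof is correct and complete. The paper itself does not supply a proof of Frobenius' theorem: it merely remarks that involutivity is a natural necessary condition and refers the reader to \cite{Lee} for the sufficiency; the argument you give --- produce a commuting local frame by row-reducing an arbitrary frame and using involutivity to kill the remaining bracket, straighten simultaneously, then glue the plaques into leaves via the finer plaque topology --- is precisely the standard proof found there, so your approach is fully aligned with what the paper points to.
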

\noindent  Involutivity is a natural necessary condition because the set of vector fields on any leaf of a foliation is involutive, hence the corresponding distribution should be as well. Frobenius implicitly proved that it turns out to be a sufficient condition in the regular case, see \cite{Lee} for a short proof.

\bigskip
Interestingly, the problem of integrating generalized distributions was approached in the same way as for the Frobenius' theorem: i.e. solving a set of linear differential equations. Indeed, at the turn of the 1950s, numerous investigations in the field of control theory $-$ the study of the solvability of first-order differential equations under the influence of one or more external parameters $-$ arose and developed in the following years. Unfortunately, the picture in control theory involves external parameters that modify and generalize the structure of Pfaffian system, so those parameters prevent to use the integrability arguments of Pfaff, Deahna, Clebsch and Frobenius.
 
 Inspired by the work of Carath\'eodory on the geometrization of the calculus of variations and Pfaffian systems, many mathematicians aimed at solving some linear differential systems from a geometric perspective \cite{jurdjevic}. Chronologically, 
Hermann was the first  to draw a bridge between control theory and differential geometry in 1963 \cite{Hermann1963}. In his view, the solutions of a differential system would correspond as the \emph{attainable set} of points that are reachable from the initial data, following the flow of the vector fields associated to the differential equations. Hence investigating integrability conditions of generalized distributions into singular foliations appeared as a necessity for control theorists.
Actually, it was also Hermann who stated the first integrability conditions both in the smooth and the analytic cases (without proof for this last case)\cite{Hermann1963}. Nagano proved that analyticity together with involutivity are sufficient conditions for integrability in 1966 \cite{Nagano}. 
Then, after a few years of small improvements \cite{matsuda1968,Lobry70}, Stefan and Sussmann independently clarified in 1973 the conditions for a family of smooth vector fields $F$ to induce an integrable distribution \cite{Sussmannofficiel, Stefanofficiel}: the only assumption is that the induced distribution $\mathcal{D}^F$ has to be invariant under the action of the flow of any element of $F$. Of course, both of them had supplementary material in their respective and quasi-simultaneous papers, but this is the main result that they had in common and that was  thoroughly used in control theory. The $F$-invariance was not in fact a new idea, since it was around since the first proposals of Hermann in 1963 \cite{Hermann1963}, and since it was made explicit by Lobry in 1970 \cite{Lobry70}. The breakthrough of Stefan and Sussmann was showing that one can drop Hermann's assumption that $F$  is a sub-Lie algebra of the space of globally defined vector fields.
After this, Stefan himself deepened his research on the integrability problem and made interesting discoveries on this topic \cite{stefan1980}.



This paper is an investigation of the different statements and arguments that occurred in geometric control theory between 1962 and 1994 related to the problem of integrating generalized distributions. 
We will see  that, even if some hard work was done, many results were forgotten or mistakenly attributed to other mathematicians, and  above all, that many people involved in this story made mistakes that led to some confusion that persists today. 
In particular we want to address the persistent claim that Stefan's and Sussmann's results were not totally correct when they were published, and that Balan corrected them. We will see to what extent this is true and we will give precisions on Balan's statements.
The goal of this article is to clarify who said what and what is proven regarding these subtle questions. In Section \ref{background}, we recall mathematical notions that are commonly used in the field. In Section \ref{nagano}, we present the most historical and useful results that were proven in the 1960s, namely Nagano's and Hermann's theorems. In Section \ref{lobry}, we give an overview of the path that led from these pioneers to the well-known theorem of Stefan and Sussmann in 1973. Then, in Section \ref{stefan}, we discuss improvements and some results that followed this breakthrough. In Section \ref{conclusion} -- the conclusion -- we acknowledge the history of this long standing question, we clarify present debates and we propose future research.

\subsection*{Acknowledgments}

I am indebted toward Arjen Baarsma, whose patience and benevolence convinced me to write this paper. His accurate insights were very welcome and had a decisive influence on this work. I also want to thank Alexis Laignelet for a careful rereading of this paper. This research was partially supported by CMUP (UID/MAT/00144/2013) funded by FCT (Portugal) with national funds. I sincerely thank Peter Gothen and Andr\'e Oliveira for their kindness and for the opportunity they gave me to do my own research at CMUP.

\section{Mathematical background}\label{background}

There are two approaches to the problem of integrating a distribution into a foliation. The approach in control theory  starts from a linear differential system, then defines a set of vector fields that carries all the information from the differential equations, and then look for solutions of these equations as the points that are reachable by the flows of these vector fields. On the contrary, the geometric approach is more focused on the concept of distribution as a given object, and questions the possibility that this distribution is the tangent space of a foliation. Thus, it is not surprising that the two communities refer to the same theorems, but under different names and formulations. Let us now recall some fundamental mathematical notions:

\begin{definition}
On a smooth manifold $M$, a \emph{(generalized) distribution} $\mathcal{D}$ is the assignment, to each point $x\in M$, of a subspace $\mathcal{D}_x$ of the tangent space $T_xM$.
\end{definition}

\noindent A distribution $\mathcal{D}$ is \emph{smooth} at a point $x$ if any tangent vector $X(x)\in \mathcal{D}_x$ can be locally extended to a smooth vector field $X$ on some open set $U\subset M$ such that $X(y)\in\mathcal{D}_y$  for every $y\in U$.  The space of smooth sections of $\mathcal{D}$ is the sub-sheaf $\Gamma(\mathcal{D}):U\mapsto \Gamma_U(\mathcal{D})$ of the sheaf of vector fields $\mathfrak{X}$ consisting of smooth (resp. analytic) vector fields that take values in $\mathcal{D}$.
As a side remark, let $x\in M$, then any family of independent sections of $\mathcal{D}$ that span $\mathcal{D}_x$ is locally free. 
It implies that the rank of the distribution in a neighborhood of $x$ is greater than or equal to the dimension of $\mathcal{D}_x$.  All these definitions have similar counterparts in the real analytic category, i.e. when all objects are analytic.


 In the 1960s, mathematicians used mostly globally defined vector fields since they had in mind the link between geometry and control theory. In the 1970s, Stefan and Sussmann gave the first geometric results that involve locally defined vector fields. When it is not specified, vector fields can be either globally or locally defined.
A set of (possibly locally defined) vector fields $F$ induces a distribution $\mathcal{D}^F$ on $M$ by the formula: 
\begin{equation*}\mathcal{D}^F_x=\mathrm{Span}\big(X(x)\, |\, X\in F\big)\end{equation*} 
for every $x\in M$. Also, $F$ induces a pseudogroup of (possibly local) diffeomorphisms of $M$  \cite{Sussmannofficiel, Stefanofficiel, bullo}. First, any $X\in F$ defines a flow $t\mapsto\phi^X_t$: for every $t\in\mathbb{R}$, the map $\phi^X_t$ is a (local) diffeomorphism of $M$, with inverse $\phi^{X}_{-t}$. The set of all (local) diffeomorphisms $\big\{\phi^X_t\big\}_{t\in\mathbb{R}}$ is thus a group that is called the \emph{group of diffeomorphisms generated by $X$}, and it is denoted by $G^X$. Second, the set of all flows $\big\{\phi^X_t\,|\,X\in F, t\in \mathbb{R}\big\}$ generates a subgroup of the group of (local) diffeomorphisms of $M$, which is the smallest group generated by $\bigcup_{X\in F}G^X$. It is called the \emph{group of diffeomorphisms generated by $F$}, and it is denoted by $G^F$. An element of $G^F$ is a composition of flows of vector fields:
\begin{equation}
\phi^{X_n}_{t_n}\circ\ldots\circ\phi^{X_2}_{t_2}\circ\phi^{X_1}_{t_1}
\end{equation}
where $X_i\in F$ and $t_i\in\mathbb{R}$ for every $1\leq i\leq n$.
 
 \begin{definition}
 Let $M$ be a smooth manifold and let $F$ be a family of vector fields.
 Given a point $x\in M$, the set of all points that can be reached from $x$ by using the elements of $G^F$ is called the \emph{$F$-orbit of $x$}, and is denoted by $O^F_{x}$.
 
  We say that a distribution $\mathcal{D}$ is \emph{$F$-invariant} if it is invariant under the action of $G^F$, i.e. if for any $X\in F$ we have:
\begin{equation*}
(\phi^X_t)_\ast\big(\mathcal{D}_y\big)\subset\mathcal{D}_{\phi^X_t(y)}
\end{equation*}
for every $y$ in the domain of $X$ and $t\in \mathbb{R}$.

\end{definition}

  The link between $F$-orbits and the integration of distributions is subtle. The point is that the distribution $\mathcal{D}^F$ generated by the family $F$ may not be equal to the tangent space of the $F$-orbits. Indeed, by definition of the Lie bracket, the $F$-orbits of a given linear differential system contain the integral curves of the commutators of vector fields of $F$. Sussmann provides some precision in \cite{Sussmannofficiel}: given some $x\in M$, if $X,Y$ are tangent vectors to the orbit $O^F_{x}$ at $x$, then $[X,Y]$ is tangent to $O^F_{x}$ as well. However, the distribution $\mathcal{D}^F$ may not be closed under Lie bracket because $F$ may not be either. This implies that in general we have $\mathcal{D}^F_x\subset T_xO^F_{x}$, with a strict inclusion. Hence, for control theorists, this is not very interesting to look at the distribution $\mathcal{D}^F$, but rather to the distribution that contains also the directions spanned by the commutators of elements of $F$. To make things more precise, let us define the \emph{Lie closure of $F$} as the smallest Lie algebra generated by elements of $F$, and denote it by $\mathrm{Lie}(F)$. It is the smallest space of vector fields such that $[S,\mathrm{Lie}(F)]\subset\mathrm{Lie}(F)$. Then the preceding argument implies that the $F$-orbits contain the $\mathrm{Lie}(F)$-orbits, then as a corollary  they coincide. The problem of finding the solutions of a linear differential system could then be reformulated as integrating the distribution induced by $\mathrm{Lie}(F)$. This is consistent with the idea that the space of vector fields tangent to the orbits are closed under Lie bracket. In general, if the $F$-orbits are submanifolds, one has:
  \begin{equation*}\mathcal{D}^F_y\subset\mathcal{D}^{\mathrm{Lie}(F)}_y\subset T_yO^F_{x}\end{equation*}
 for every  $y\in O^F_{x}$.
The equality on the right hand side is guaranteed when $F$ satisfies some particular conditions. For example, Nagano showed  that if $F$ is analytic, the equality is automatically satisfied, whereas  Hermann's condition is  that $F$ be locally finitely generated. Essentially, these are the two cases that most control theorists consider, see \cite{bullo, jurdjevic, Agrachev} and Section \ref{nagano} for details. On the other hand, Stefan and Sussmann proved that $\mathcal{D}^F_y= T_yO^F_{x}$ at every point $x\in M$ and $y\in O^F_{x}$ if the distribution $\mathcal{D}^F$ is $F$-invariant \cite{Sussmannofficiel, Stefanofficiel}.

  \begin{example}
 On $M=\mathbb{R}^3$, let $F$ be the family of vector fields generated by the action of $\mathfrak{so}(3)$ on $\mathbb{R}^3$. It defines an integrable distribution: the leaves are concentric spheres, and the point at the origin. The tangent bundle of each sphere is indeed invariant under the action of $\mathfrak{so}(3)$.
  \end{example}

\begin{remarque}
 If we restrict ourselves to the semi-group $H^F$ generated by the flows of elements of $F$ with positive times only, we may obtain a different set that we call the \emph{attainable set of $x$}. This correspond to the situation where only forward-in-time motions are allowed, and this is essentially the set that control theorists are interested in.   These are the conventions mostly used by Sussmann and control theorists \cite{Agrachev, Sussmannofficiel, jurdjevic, bullo}. Notice that Stefan's conventions are slightly different: in his fundamental paper \cite{Stefanofficiel}, he designates the $F$-orbits as accessible sets. 
 An equivalent formulation is made by using compositions of integral curves of elements of $F$: an \emph{integral path} is a piecewise smooth path $\gamma:[a,b]\to M$ such that for every open interval $I\subset[a,b]$ where $\gamma$ is differentiable, there exists a vector field $X\in F$ such that:
\begin{equation}
\frac{\dd}{\dd t}\gamma (t)=X\big(\gamma(t)\big)\label{integralpath}
\end{equation}
Then the attainable set of $x$ is precisely the set of points reachable by such integral paths.
 Obviously, if the family $F$ is \emph{symmetric}, i.e. if $F=-F$, then the $F$-orbit of $x$ and the attainable set of $x$ coincide. 
\end{remarque}

This is now time to introduce more geometrical tools, and where we turn to the theory of integration of distributions:
\begin{definition}
Given a distribution $\mathcal{D}$, an \emph{integral manifold of  $\mathcal{D}$} is a connected immersed submanifold $N\subset M$ such that $T_yN=\mathcal{D}_y$ for every $y\in N$.
\end{definition}
\noindent Given a point $x\in M$, we say that the distribution $\mathcal{D}$ is \emph{integrable at $x$} if there is an integral manifold $N$ of $\mathcal{D}$ that contains $x$. 
An integral manifold through $x$ is said \emph{maximal} if it contains every integral manifolds through $x$. A distribution $\mathcal{D}$ is \emph{integrable} if for every $x\in M$, there exists a maximal integral manifold through $x$. In particular, if $\mathcal{D}$ is integrable, $M$ is the disjoint union of the maximal integral manifolds of $\mathcal{D}$.
Moreover, if an integrable distribution $\mathcal{D}=\mathcal{D}^F$ is induced by some family of vector fields $F$, then the maximal integral manifolds are the $F$-orbits, this is precisely the content of Stefan-Sussmann's theorem, see Section \ref{lobry}.

Here, the word `integrable' refers directly to the theory of foliations. Recall that one defines a (possibly singular) \emph{foliation} as a partition of $M$ into connected immersed submanifolds, that are called the \emph{leaves of the foliation}.  
These definitions imply that if a distribution $\mathcal{D}$ is integrable, then the maximal integral manifolds of $\mathcal{D}$ form the leaves of a foliation. Stefan has even shown that there exist \emph{distinguished charts} that are adapted to the foliation, see Section \ref{conclusion}. 
Given a point $x\in M$, we write the maximal integral manifold of $\mathcal{D}$ through $x$ as $L_x$ and we call it the \emph{leaf through $x$}. 
Since the map $x\mapsto\mathrm{dim}(L_{x})$ which associates to any point $x$ the dimension of its leaf is lower semi-continuous, the dimensions of the leaves in a neighborhood of $x$ are necessarily greater than or equal to $\mathrm{dim}(L_x)$. This is consistent with the fact that the rank of a distribution is  lower semi-continuous as well.
 A point $x\in M$ is said to be a \emph{regular point} if the dimension of the leaves is constant in some neighborhood of $x$, and a \emph{singular point} (or \emph{singularities}) otherwise. A leaf $L$ is said \emph{regular} if every point of $L$ is a regular point, and it is said \emph{singular} otherwise. The set of regular points is open and dense in $M$, and the leaves of highest dimensions are necessarily regular. 

\begin{example}
Let $\mathcal{D}$ be the smooth distribution on $\mathbb{R}^{2}$ defined by:
\begin{equation*}
\mathcal{D}_{(x,y)}=
\begin{cases}\mathrm{Span}\big(\frac{\partial}{\partial x}\big) \quad \text{for}\quad 0< y\\
\{0\} \hspace{1.32cm} \text{for}\quad y\leq 0
\end{cases}
\end{equation*}
The regular points are those that do not belong to the horizontal axis. This distribution is integrable into a foliation of $\mathbb{R}^{2}$ that has horizontal leaves for $y>0$ and points otherwise.
\end{example}

Last but not least, every theorem that are presented in this article are constructive, which means that they provide a recipe to build the integral manifolds of a distribution. In most cases, the integral manifolds are the $F$-orbits, but it does not come with a natural topology and smooth structure. This is where a result by Chevalley is systematically referred to: the construction of a `strong' topology on $M$ that is adapted to such integral manifolds. This construction is presented in chapter 3, Section VIII, of \cite{chevalley}.  More precisely, 
the idea is to define, for every $F$-orbit, a family of small patches that cover the orbit. In most cases, this is done by using the exponential map, because the integral curves of an element of $F$ stays in the orbit. Hence each point of an $F$-orbit would induce such a small patch in its vicinity, that is also an integral manifold through $x$. The union of all these patches is taken as a basis for the new topology, which turns to be finer than the original one. Mathematically, it is as if every open set in the older topology was now `foliated' by the integral manifolds through each of its points. This topology enables to rigorously define continuous maps, local homeomorphisms and so on. Then, one can rely on this topology to provide each leaf with a smooth (or real analytic) manifold structure. Originally, the construction of Chevalley was fit for analytic regular foliations, but Hermann and Nagano could adapt it easily to their context \cite{Agrachev}. They indeed only considered vector subspaces of $\mathfrak{X}(M)$, hence the basis of the new topology could be obtained from the small exponential patches based at each point. On the contrary, Stefan and Sussmann considered families of vector fields that may not satisfy the vector space axioms, and then they had to adapt the construction of Chevalley to their own needs. This is precisely for this reason that their proofs are tedious to go through, and that their work has to be acknowledged.

\begin{remarque}
As a final remark, a recent and important result \cite{texas} shows that any smooth distribution $\mathcal{D}$ is actually point-wise finitely generated, i.e. there is a finite family of vector fields $F$ such that $\mathcal{D}=\mathcal{D}^F$. However,  this fact does not imply that the sheaf of sections $\Gamma(\mathcal{D})$ (or any other family of vector fields generating $\mathcal{D}$) is finitely generated. For example, take the vector field $X=\chi(x)\frac{\partial}{\partial x}$ defined on $M=\mathbb{R}$, where the function $\chi$ is defined by:
\begin{equation*}
\chi=
\begin{cases}e^{-\frac{1}{x}} \hspace{0.47cm} \text{for}\quad  x>0\\
\hspace{0.23cm}0 \hspace{0.7cm} \text{for}\quad x\leq 0
\end{cases}
\end{equation*}
The associated distribution $\mathcal{D}^X$ consists in the null vector space on $\mathbb{R}_{-}$ and the tangent space $T_x\mathbb{R}$ on the open line $\mathbb{R}_{+}^*$. This distribution is point-wise generated by $X$: $\mathcal{D}^X_x=\mathrm{Span}\big(X(x)\big)$, and it is obviously integrable, as an integral curve of the vector field $X$. However the sections of $\mathcal{D}^X$ are not finitely generated in any neighborhood of 0, see \cite{texas}.
\end{remarque}

\section{Nagano and Hermann}\label{nagano}

As was said in the introduction, Hermann played a prominent role in the search for a generalization of Frobenius' theorem to generalized distributions.
In a paper in 1963 \cite{Hermann1963}, he made explicit the relationship between control theory, Pfaffian systems and foliation theory. Hermann introduces the geometric setup for control theory: linear differential systems can be equivalently seen as a family of vector fields. As such, he can be seen as the founder of geometric control theory.
In the same paper, he also gives not only one sufficient condition to the integrability problem, but he actually gave three. Two of them relate to the smooth case and were proven by himself one year earlier \cite{Hermann1962}. The last one, in the analytic case, is a claim that he had not yet proven at that time, and that was proven by Nagano in 1966 \cite{Nagano}.


Let us start with the analytic case. Item $(c)$ in \cite{Hermann1963} states that any analytic family of vector fields that is involutive induces an integrable distribution.
Working in the analytic category is simpler than in the smooth category because one can rely on some properties of analytic geometry. Reproducing the statement of Hermann, Nagano considers the set of globally defined analytic vector fields $\mathfrak{X}(M)$ as an infinite-dimensional Lie algebra, then he picks up a subspace $F$ (i.e. a set of globally defined vector fields) 
and his claim is as follows:

\begin{theoreme}\emph{\textbf{Nagano (1966)}}\label{theonagano}
Let $M$ be a real analytic manifold, and let $F$ be a sub-Lie algebra of $\mathfrak{X}(M)$. Then the induced analytic distribution $\mathcal{D}^F$ is integrable.
\end{theoreme}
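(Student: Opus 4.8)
The plan is to reduce the global statement to a local one and then exploit the single special feature of the analytic category, namely the convergence of the adjoint exponential series. Since $M$ is the disjoint union of the maximal integral manifolds as soon as each point admits a local integral manifold of the right dimension (the passage from local integral manifolds to the leaves being exactly the Chevalley-type construction of the strong topology recalled in Section~\ref{background}), it suffices to fix a point $x_0\in M$, set $r:=\dim\mathcal{D}^F_{x_0}$, and produce an embedded $r$-dimensional integral manifold through $x_0$.

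The key lemma — and the only place where analyticity is genuinely used — is that an analytic Lie subalgebra $F$ automatically makes $\mathcal{D}^F$ into an $F$-invariant distribution. To see this, fix $X,Y\in F$ and a point $z$, and consider the curve $t\mapsto\big((\phi^X_{t})_\ast Y\big)(\phi^X_t(z))=(\dd\phi^X_t)_z\big(Y(z)\big)$ in $T_{\phi^X_t(z)}M$. For analytic vector fields this curve is given, for $t$ small, by the convergent series
\begin{equation*}
(\phi^X_{t})_\ast Y=\sum_{k\geq 0}\frac{(-t)^k}{k!}(\mathrm{ad}_X)^kY,\qquad \mathrm{ad}_XY:=[X,Y],
\end{equation*}
where $(\mathrm{ad}_X)^kY$ denotes the $k$-fold iterated bracket. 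Because $F$ is a Lie subalgebra, every term $(\mathrm{ad}_X)^kY$ again belongs to $F$, so each partial sum evaluated at $z$ lies in the finite-dimensional (hence closed) subspace $\mathcal{D}^F_z$; passing to the limit gives $\big((\phi^X_{t})_\ast Y\big)(\phi^X_t(z))\in\mathcal{D}^F_{\phi^X_t(z)}$. This is precisely the $F$-invariance $(\phi^X_t)_\ast(\mathcal{D}^F_z)\subset\mathcal{D}^F_{\phi^X_t(z)}$; applying the same inclusion to $\phi^X_{-t}$ shows the flow restricts to a linear isomorphism of fibres, so $\dim\mathcal{D}^F$ is constant along $F$-orbits.

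With invariance in hand the construction is routine. Choose $X_1,\dots,X_r\in F$ with $X_1(x_0),\dots,X_r(x_0)$ a basis of $\mathcal{D}^F_{x_0}$, and define the analytic map $\Phi(t_1,\dots,t_r)=\phi^{X_1}_{t_1}\circ\cdots\circ\phi^{X_r}_{t_r}(x_0)$ on a small neighbourhood $U$ of $0\in\mathbb{R}^r$. Since $\dd\Phi_0$ sends $\partial/\partial t_i$ to $X_i(x_0)$, it has rank $r$, so by the inverse function theorem $N:=\Phi(U)$ is an embedded $r$-dimensional analytic submanifold for $U$ small. Differentiating $\Phi$ with respect to $t_i$ writes $\partial\Phi/\partial t_i$ as the differential of a composition of flows of elements of $F$ applied to the value of $X_i$ at an intermediate point; by the invariance just proved this vector lies in $\mathcal{D}^F_{\Phi(t)}$, whence $T_yN\subset\mathcal{D}^F_y$ for all $y\in N$. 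Finally, invariance also gives $\dim\mathcal{D}^F_y=\dim\mathcal{D}^F_{x_0}=r$ for every $y\in N$ (each such $y$ is reached from $x_0$ by composed flows), and since $\dim T_yN=r$ we conclude $T_yN=\mathcal{D}^F_y$. Thus $N$ is an integral manifold through $x_0$, and $\mathcal{D}^F$ is integrable.

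The main obstacle is entirely concentrated in the key lemma: justifying that the formal exponential series $e^{-t\,\mathrm{ad}_X}Y$ actually converges to the pushforward $(\phi^X_t)_\ast Y$. This is the one step that uses real-analyticity in an essential way and that fails in the merely smooth category — which is exactly why, for smooth $F$, one must impose $F$-invariance as a hypothesis rather than deriving it, recovering the Stefan--Sussmann statement recalled in Section~\ref{background}. Everything else (the immersion property of $\Phi$, the rank count, the passage from local integral manifolds to a global foliation) is formal and uses only that the relevant subspaces are closed and that the flows exist.
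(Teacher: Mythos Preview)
Your argument is correct and takes a genuinely different route from Nagano's original proof as sketched in the paper. Nagano works with a splitting of $F$ near $x$ into a finite-dimensional piece $F(x)$ spanning $\mathcal{D}^F_x$ and a complementary $\mathcal{O}(U)$-module $\mathcal{G}(x)$, defines $N_x$ as the image of the exponential map on a small ball in $F(x)$, and then invokes the identity theorem for real-analytic functions to show that $\mathcal{G}(x)$ contributes nothing along $N_x$; he also proves separately that finite intersections of integral manifolds through $x$ are again integral manifolds, which is what feeds Chevalley's construction. You instead isolate a single lemma --- analyticity plus involutivity forces $F$-invariance, via convergence of the $\mathrm{ad}$-exponential series for $(\phi^X_t)_\ast Y$ --- and then build the local integral manifold from composed flows exactly as in the Stefan--Sussmann setting. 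In the context of this survey your approach is arguably more illuminating: it exhibits Nagano's theorem as the instance of Theorem~\ref{theosussmann} in which the $F$-invariance hypothesis is automatic, which is precisely the thread running through Sections~\ref{lobry}--\ref{stefan}. Nagano's original argument, by contrast, delivers the local intersection property of integral manifolds as a by-product, something you defer entirely to the general Chevalley machinery. One small wording slip: when you pass to the limit in the series you write ``evaluated at $z$'' and land in $\mathcal{D}^F_z$, whereas the conclusion you actually draw (and need) lives in $\mathcal{D}^F_{\phi^X_t(z)}$; either evaluation point works after a relabelling, but the sentence should be made consistent.
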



%


\begin{proof} We only give here a sketch of the proof, and we refer to \cite{Nagano} for more details.
The original proof of Nagano consists in showing that for any point $x\in M$: 1. the set $\mathfrak{N}_x$ of integral manifolds through $x$ is not empty, and that 2. any finite intersection of integral manifolds through $x$, restricted to some open neighborhood of $x$, is an embedded integral manifold. He proves the first item by splitting the distribution in a neighborhood $U$ of $x$, in the sense that he selects a $\mathrm{dim}\big(\mathcal{D}^F_x\big)$-dimensional subspace $F(x)\subset F$ that generates the distribution at $x$, and complete it by a $\mathcal{O}(U)$-module $\mathcal{G}(x)$ that contains all  the vector fields of $F$ that are not in the point-wise span of $F(x)$. Here, $\mathcal{O}(U)$ means the ring of real analytic functions on $U$. Then he observes that the distribution $\mathcal{D}^F$ coincides with the distribution induced by $F(x)\oplus\mathcal{G}(x)$. Nagano then defines $N_x$ as the embedded submanifold induced by the exponential map $\exp:B_\epsilon\to M$, where  $B_\epsilon$ is a small ball of radius $\epsilon$ centered on zero in $F(x)$. He then uses the splitting to show that the embedded submanifold $N_x$ is an integral manifold of $\mathcal{D}^F$.

Then, he defines the set $L_x$ as the union of all integral manifolds through $x$: $L_x=\bigcup_{N\in\mathfrak{N}_x}N$. By construction the rank of $\mathcal{D}^F$ is constant over $L_x$, hence this is a good candidate for the leaf through $x$. The topology and the analytic structure on $L_x$ are inherited from the respective topologies and analytic structures of all $N$ in $\mathfrak{N}_x$. Open sets of $N\in\mathfrak{N}_x$ are considered to be open in $L_x$, and item 2. is crucial to show that a finite intersection of open sets is open, as a neighborhood of each of its points. The details of the construction of the topology and the atlas on $L_x$ can be found in Chevalley's book \cite{chevalley}. Chevalley proves Frobenius' theorem for analytic regular distributions, and Nagano's theorem is a direct generalization of this theorem to analytic singular distributions.
\end{proof}

The proof of Nagano uses the analyticity of the vector fields by summoning the property that any real function whose successive derivatives vanish at the origin is the zero function. This theorem generalizes Frobenius' theorem in a straightforward way to the analytic (and singular) case, because it doesn't assume anything other than involutivity. We see in the following example that in the smooth case, the involutivity condition is not sufficient anymore:

\begin{example}\label{ex:nagano}
Let $\mathcal{D}$ be the smooth distribution on $\mathbb{R}^{2}$ defined by:
\begin{equation*}
\mathcal{D}_{(x,y)}=
\begin{cases}T_{(x,y)}\mathbb{R}^{2} \hspace{0.8cm} \text{for}\quad 0< x\\
\mathrm{Span}\big(\frac{\partial}{\partial x}\big) \hspace{0.55cm} \text{for}\quad x\leq 0
\end{cases}
\end{equation*}
where we understand $\langle\frac{\partial}{\partial x}\rangle$ as the subspace of $T_{(x,y)}\mathbb{R}^{2}$ spanned by the tangent vector $\frac{\partial}{\partial x}$. Sections of this distribution consists of sums of horizontal vector fields and vertical vectors fields which vanish for $x\leq0$. The bracket will preserve this property and therefore the distribution is involutive.

We now show that though this smooth distribution is involutive, it cannot be integrated into a singular foliation. On the right half-plane (for $x>0$), the leaf associated to this distribution is all of the open half-plane. On the contrary, on the open left half-plane (for $x< 0$) the vertical vector field vanishes hence the distribution admits integral manifolds that are horizontal lines (since at each point the vector field $\frac{\partial}{\partial x}$ generates the tangent space to the leaf). The maximal integral manifold passing through the point $(x,y)$ (for $x\leq0$) is the line $N_y=\big\{(w,y)\,|\,w<0\big\}$. On the vertical axis, the distribution is spanned by $\frac{\partial}{\partial x}$ but for any given $y\in\mathbb{R}^2$, the subset $N_y\cup \{(0,y)\}$ is not an immersed submanifold of $\mathbb{R}^2$, because it is not open on its right end. Hence the points on the vertical axis do not admit maximal integral manifolds, i.e the distribution is not integrable.
\end{example}

The above example shows that in the smooth case, involutivity does not imply integrability. Hermann proposed two conditions to solve this issue. The condition for which Hermann is known is condition $(b)$ in \cite{Hermann1963} and corresponds to the condition found in the theorem now bearing his name that was proven one year earlier in \cite{Hermann1962}.  Because he is focused on the relationship with control theory where equations may be defined everywhere, Hermann considers only globally defined vector fields. In other words he relies on subspaces $F\subset\mathfrak{X}(M)$ to describe a linear differential system of equations.
 He says that $F\subset\mathfrak{X}(M)$ is \emph{locally finitely generated} if for every open set $U\subset M$, there exists $X_1,\ldots,X_p\in F$ such that the restriction of $F$ to $U$ is contained in the $\mathcal{C}^\infty(U)$-module generated by $X_1,\ldots, X_p$. In other words: $F|_U\subset\mathcal{C}^\infty(U)\mathrm{Span}\big(X_1|_U,\ldots,X_p|_U\big)$. Then, Hermann's statement in \cite{Hermann1962} is:

\begin{theoreme}\emph{\textbf{Hermann (1962)}}\label{theohermann}
Let $M$ be a smooth manifold, and let $F$ be a locally finitely generated sub-Lie algebra of $\mathfrak{X}(M)$. Then the induced smooth distribution $\mathcal{D}^F$ is integrable.
\end{theoreme}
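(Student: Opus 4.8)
The plan is to reduce the statement to a single structural property---that the two hypotheses force $\mathcal{D}^F$ to be \emph{$F$-invariant}---and then to manufacture the maximal integral manifolds as the $F$-orbits, topologized by Chevalley's construction recalled in Section~\ref{background}. The local finite generation is precisely the device that replaces the analyticity used in Nagano's theorem (Theorem~\ref{theonagano}).

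First I would prove the central lemma: \textbf{if $F$ is a locally finitely generated sub-Lie algebra, then $\mathcal{D}^F$ is $F$-invariant}. Fix $X\in F$, a point $y$, and look at the trajectory $t\mapsto\phi^X_t(y)$. On an open set $U$ meeting this trajectory, local finite generation gives $X_1,\dots,X_p\in F$ with $F|_U\subset\mathcal{C}^\infty(U)\,\mathrm{Span}(X_1,\dots,X_p)$; in particular $\mathcal{D}^F_z=\mathrm{Span}\big(X_1(z),\dots,X_p(z)\big)$ for every $z\in U$. Since $F$ is closed under the bracket, each $[X,X_i]$ again lies in $F$, so $[X,X_i]|_U=\sum_j f_{ij}X_j$ for smooth functions $f_{ij}$. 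Setting $v_i(t)=(\phi^X_{-t})_\ast\big(X_i(\phi^X_t(y))\big)\in T_yM$, the standard Lie-derivative computation converts these bracket relations into the \emph{finite} linear system $\dot v_i(t)=\sum_j f_{ij}(\phi^X_t(y))\,v_j(t)$. Because the solution operator of a linear ODE is invertible, $\mathrm{Span}\big(v_1(t),\dots,v_p(t)\big)$ does not depend on $t$, that is $(\phi^X_{-t})_\ast\mathcal{D}^F_{\phi^X_t(y)}=\mathcal{D}^F_y$. Covering a compact piece of the trajectory by finitely many such $U$ and concatenating the local conclusions yields invariance---indeed the equality $\mathcal{D}^F_{\phi^X_t(y)}=(\phi^X_t)_\ast\mathcal{D}^F_y$---all along the flow.

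Both hypotheses enter here, and essentially nowhere else: the Lie-algebra condition keeps $[X,X_i]$ inside $F$, while local finite generation truncates the a priori infinite expansion $(\phi^X_{-t})_\ast X_i=\sum_k\tfrac{t^k}{k!}(\mathrm{ad}_X)^kX_i$ to a finite-dimensional ODE. Exchanging convergence of this series (the analytic mechanism in Nagano's proof) for the invertibility of a fundamental matrix is, I expect, the main obstacle: one must make sure the coefficients $f_{ij}$ are available along the \emph{entire} trajectory and patch the local solutions without the span collapsing or jumping at the overlaps.

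With $F$-invariance available, I would build integral manifolds by flows. At $x$, put $d=\dim\mathcal{D}^F_x$, choose $X_1,\dots,X_d\in F$ with $X_1(x),\dots,X_d(x)$ a basis of $\mathcal{D}^F_x$, and define $\psi(t_1,\dots,t_d)=\phi^{X_d}_{t_d}\circ\dots\circ\phi^{X_1}_{t_1}(x)$. Since $d\psi_0$ sends the coordinate frame onto the independent vectors $X_k(x)$, the map $\psi$ is an immersion near $0$, so its image $N_x$ is an immersed $d$-dimensional submanifold. Every tangent vector of $N_x$ is an iterated flow-pushforward of some $X_k$, hence lies in $\mathcal{D}^F$ by invariance; and since invariance makes the rank of $\mathcal{D}^F$ constant, equal to $d$, along the orbit $O^F_x$, the inclusion $T_{\psi(t)}N_x\subset\mathcal{D}^F_{\psi(t)}$ between spaces of equal dimension is an equality, so $N_x$ is a local integral manifold through $x$. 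Finally I would take the maximal integral manifold through $x$ to be the orbit $O^F_x$, assembled from the patches $N_y$ for $y\in O^F_x$: as the rank is constant on the orbit all these patches have dimension $d$ and overlap in open sets, so Chevalley's finer topology glues them into a single immersed leaf, which gives the integrability of $\mathcal{D}^F$.
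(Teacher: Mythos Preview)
Your argument is correct and follows essentially the same route as the paper's sketch: the matricial ODE obtained from local finite generation together with involutivity, constancy along flow lines, the $F$-orbits as leaves, and Chevalley's finer topology. Two cosmetic differences are worth noting: you extract full $F$-invariance from the ODE (the paper only records the weaker consequence that the rank of $\mathcal{D}^F$ is constant along integral curves, though the same computation gives your stronger conclusion), and you manufacture the local plaques by iterated flows $\phi^{X_d}_{t_d}\circ\cdots\circ\phi^{X_1}_{t_1}(x)$ rather than by the exponential map $\exp:F(y)\supset B_\epsilon\to M$, which the paper stresses is available precisely because Hermann's $F$ is a linear subspace of $\mathfrak{X}(M)$.
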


\begin{proof}
As before, this is but a sketch of the original proof, whose details can be found in \cite{Hermann1962}.
The proof of Hermann relies on showing that the rank of the distribution is constant along the integral curve of any vector field $X\in F$, and hence on the $F$-orbits. Hermann shows this result by using the fact that $F$ is involutive, and since it is also locally finitely generated, the Lie bracket with $X$ can be expressed in terms of local generators of the distribution. He uses this property to obtain a matricial differential equation in $T_xM$, and solving it shows that the rank of $\mathcal{D}^F$ is locally constant on the integral curve of $X$. By a compactness argument, he concludes that it is constant on the entire integral curve of $X$. 

He then defines $L_x$ as the set of all points of $M$ that can be joined to $x$ by an integral path of $F$. This set of points $L_x$ coincides with the $F$-orbit of $x$ because the set of vector fields $F$ is symmetric. Notice that the involutivity of $F$ implies that $\mathcal{D}^F=\mathcal{D}^{\mathrm{Lie}(F)}$.
Since the rank of $\mathcal{D}^F$ is constant along the integral curve of any element $X\in F$, it is constant over $L_x$, this is then a good candidate to be the leaf of $\mathcal{D}^F$ through the point $x$. 

The topology and the smooth atlas on $L_x$ are induced by the construction of the leaf itself: for any point $y\in L_x$, one can find a subspace $F(y)\subset F$ whose dimension is the dimension of $L_x$ (hence the importance of showing that is it constant over $L_x$), and then the exponential map defines an embedding of a neighborhood of zero in $F(y)$ into $M$ such that $0$ is mapped on $y$. By definition the image $N_y$ of this exponential map is entirely contained in $L_x$.
 Then Hermann uses these embedded submanifolds $\{N_y\}_{y\in M}$ as a basis for the new topology on $M$, as discussed in the construction of Chevalley \cite{chevalley}. 
 This topology is used afterwards to equip the $F$-orbits with a manifold structure.\end{proof}
 
 All this discussion is made possible because the families of vector fields that Hermann studies are subspaces of $\mathfrak{X}(M)$, thus he can use the exponential map  as a tool to generate charts on the $F$-orbits. This is definitely not allowed anymore in the generalization of this result by Stefan and Sussmann, who work with families of vector fields that are not necessarily vector spaces.
 Notice that an alternative choice of charts for $L_x$ is made by Lobry in \cite{Lobry70}, who provides a set of `curviligne coordinates' adapted to any choice of basis of $\mathcal{D}^F_y$. 
 
The last integrability condition proposed by Hermann in his 1963 paper is in fact nothing but the second part of the proof of Theorem \ref{theohermann}, see condition $(a)$ in \cite{Hermann1963}. 
\noindent 
More precisely, Hermann's statement is that if $F$ is a sub-Lie algebra of $\mathfrak{X}(M)$, and if the rank of the distribution $\mathcal{D}^F$ is constant on the integral paths of $F$, then $\mathcal{D}^F$ is integrable.
Notice that the converse of Hermann's statement is not true, even though it is claimed in Theorem 1.41 in \cite{Olver}. It can indeed be refuted by the following counter-example due to Balan in some unpublished notes:
\begin{example}
On $M=\mathbb{R}^2$, let $X=\varphi(x,y)\frac{\partial}{\partial x}$ and $Y=\big(x^2+y^2\big)\frac{\partial}{\partial y}$, where:
\begin{equation*}
\varphi(x,y)=
\begin{cases}e^{-\frac{1}{x^2+y^2}} \hspace{0.42cm} \text{for}\quad  (x,y)\neq(0,0)\\
\hspace{0.55cm}0 \hspace{0.9cm} \text{for}\quad (x,y)= (0,0)
\end{cases}
\end{equation*}
Let $F$ be the $\mathcal{C}^\infty(M)$-module generated by $X$ and $Y$. The induced distribution $\mathcal{D}^F$ is given by:
\begin{equation*}
\mathcal{D}_{(x,y)}=
\begin{cases}T_{(x,y)}\mathbb{R}^{2} \hspace{0.48cm} \text{for}\quad (x,y)\neq (0,0)\\
\hspace{0.5cm}0 \hspace{1.05cm} \text{for}\quad (x,y)= (0,0)
\end{cases}
\end{equation*}
which is obviously integrable. However, the commutator $[X,Y]$ is, for any couple $(x,y)\neq(0,0)$:
\begin{equation}
\big[X,Y\big](x,y)=2x\frac{\phi(x,y)}{x^2+y^2}\,X-\frac{2y}{x^2+y^2}\,Y
\end{equation}
One can show that the function $(x,y)\mapsto 2x\frac{\phi(x,y)}{x^2+y^2}$ is smooth at the origin, but that the function $(x,y)\mapsto\frac{2y}{x^2+y^2}$ does not admit a limit in $(0,0)$. Hence it is not a smooth function, and the commutator $[X,Y]$ does not take values in $F$, that is: $F$ is not involutive. This example can be used to show that Theorem 1.40 in \cite{Olver} is wrong as well, since the finite set of vector fields consisting of $X$ and $Y$ is not in involution, even though the induced distribution is integrable. \end{example}

Very interestingly, Hermann was not very acknowledged for his third statement, even though it had deep consequences regarding the integrability issues. Every mathematician that tried to prove some result on integrability of smooth distributions systematically emphasized the importance of working with the integral curves of the family of vector fields $F$, in particular to show that the distribution is $F$-invariant along the integral paths. Being the first to bring the attention to this idea, it seems natural to emphasize Hermann's work on integration of generalized distribution as one of the most influential of the field. 

As a final remark, notice that the fact that a smooth distribution is integrable does not necessarily imply that the sheaf of its sections is locally finitely generated (see the final remark in Section \ref{background}).
In the years following the breakthrough of Hermann and Nagano, some attempts were made to find the minimal assumptions that are sufficient for a smooth distribution to be integrable. For example, after a careful analysis of the point in Nagano's proof that requires analyticity, Matsuda provided an adaptation of Nagano's theorem to the smooth case, at the price of requiring rather unnatural conditions \cite{matsuda1968}. 

\section{Improvements and achievements}\label{lobry}

After Hermann, the first important contribution to the problem of integrating smooth distributions was made by Lobry in 1970 \cite{Lobry70}. 
He tried to reproduce the proof of Hermann by weakening the two assumptions that $F$ is involutive and locally finitely generated, and mixing them in a unique condition. Thus, Lobry proposed the following: a set of vector fields $F$ is \emph{locally of finite type} if for every $x\in M$ there exist $X_1,\ldots, X_p\in F$ that span $\mathcal{D}^F_x$, and such that for every $X\in F$, there exists an open neighborhood $U$ of the point $x$ and some functions $(f_{ij})_{1\leq i,j\leq p}\in\mathcal{C}^\infty(U)$ such that:
\begin{equation}\label{lobrysconditions}
[X,X_i](y)=\sum_{j=1}^p\,f_{ij}(y)\, X_j(y)
\end{equation}
for every $y\in U$. Originally Lobry did not require that the subset of vector fields $X_1,\ldots,X_p$ span the distribution at $x$, and it was a condition that Sussmann thought was missing so he added it in his paper when he referred to Lobry's conditions \cite{Sussmannofficiel}. 

The main difference between Hermann's and Lobry's conditions \eqref{lobrysconditions} is that in the first case, since one can pick up a set of local generators of the family $F$, they span the distribution $\mathcal{D}^F$ in some neighborhood of $x$, whereas in Lobry's assumption, the set of vector fields $F'$ that span $\mathcal{D}^F_x$ may not span $\mathcal{D}^F$ in any neighborhood of $x$. What can only be shown is that the distribution $\mathcal{D}^{F'}$ has constant rank on the integral curve of any elements $X\in F$ if we are sufficiently close to $x$. Hence, using the same kind of arguments as in the proof of Hermann is not sufficient to conclude that the distribution $\mathcal{D}^F$ has constant rank on the integral curves of elements of $F$. 
Thus, contrary to the claim of Lobry, the condition that a family of vector fields is locally of the finite type is not a sufficient condition for integrability. This was first noted by Stefan who proposed more subtle conditions in \cite{stefan1980}, see Sections \ref{stefan} and  \ref{conclusion}.

 On the other hand, it may happen that Lobry's conditions \eqref{lobrysconditions} are sufficient for integrability, when applied to the correct set of vector fields. In particular, a smooth distribution may be integrable if the sheaf of sections of $\mathcal{D}$ satisfies Lobry's conditions. This is a claim made by Stefan in 1974 \cite{Stefanofficiel}, but he did not provided any proof before his 1980 paper, as a corollary of a more general proposition, see Theorem 4 in \cite{stefan1980}. We will show in Section \ref{stefan} that even though Theorem 4  is wrong as it is written in \cite{stefan1980}, it can be subtly modified to obtain a correct proof of Stefan's claim on Lobry's conditions.

 Sussmann himself, convinced of the validity of Lobry's conditions in broad generality and of the truthfulness of the proof of Lemma 1.2.1 in \cite{Lobry70}, provided a refinement of the condition that $F$ is locally of the finite type by noticing that since one works on the integral curves of elements of $F$, one can get rid of the open neighborhood condition and only ask that the bracket $[X,X_i]$ is defined on the integral curve on $X$. In other words, Sussmann's integrability conditions are that for every $x\in M$ there exist $X_1,\ldots, X_p\in F$ that span $\mathcal{D}^F_x$, and such that for every $X\in F$, there exists some $\epsilon>0$ and some functions $(g_{ij})_{1\leq i,j\leq p}\in\mathcal{C}^\infty\big(\left]-\epsilon,\epsilon\right[\big)$ such that:
\begin{equation}\label{sussmannsconditions}
\big[X,X_i\big]\big(\phi^X_t(x)\big)=\sum_{j=1}^p\,g_{ij}(t)\, X_j\big(\phi^X_t(x)\big)
\end{equation} 
 for every $t\in\left]-\epsilon,\epsilon\right[$.
 Unfortunately, even if this last condition seems mathematically satisfying because it appears as an optimized generalization of Hermann's condition for integrability, it is not sufficient. This was pointed out by Balan in \cite{balan}.
 
 Independently, Stefan, in his 1974 paper \cite{Stefanofficiel} (however written and submitted in 1973), provided a resembling condition that was sufficient for integrability. He slightly modified the wording and added the conditions that the vector fields $X_1,\ldots,X_p$ depend on the choice of the vector field $X\in F$, and that they span $\mathcal{D}^F$ on the integral curve of $X$. In other words, for every $x\in M$ and $X\in F$, there exists a finite set of vector fields $X_1,\ldots,X_p\in F$, some $\epsilon >0$ and some functions $(g_{ij})_{1\leq i,j\leq p}\in\mathcal{C}^\infty\big(\left]-\epsilon,\epsilon\right[\big)$ such that:
 \begin{enumerate}
 \item $\mathcal{D}^F_{\phi^X_t(x)}=\mathrm{Span}\Big(X_1\big(\phi^X_t(x)\big),\ldots,X_p\big(\phi^X_t(x)\big)\Big)$
 \item $\big[X,X_i\big]\big(\phi^X_t(x)\big)=\sum_{j=1}^p\,g_{ij}(t)\, X_j\big(\phi^X_t(x)\big)$
 \end{enumerate}
 for every $t\in\left]-\epsilon,\epsilon\right[$. The important idea is that now the vector fields $X_1,\ldots, X_p$ depend both on $x$ and on $X$. The proof that a family of vector fields satisfying such conditions induce an integrable distribution follows the exact same lines as Hermann's proof. 
 However, since it is usually very cumbersome to check Stefan's integrability conditions, mathematicians do not use them and they are today mostly forgotten. 
 
 \bigskip
  
 The story does not stop here: the fact that Lobry proposed a wrong claim was systematically emphasized by Stefan \cite{Stefanofficiel, stefan1980}. However, he did not present any counter-example before his 1980 paper \cite{stefan1980}, nor did he ever publicly mentioned that Sussmann's conditions were not sufficient either, even though he may have been completely aware of it. This observation was made by Balan in 1994 \cite{balan}. He explained in details that the implication $(e)\Longrightarrow(d)$ of Theorem 4.2 in \cite{Sussmannofficiel} (that relies on Sussmann's conditions \eqref{sussmannsconditions}) is false, but that the equivalences $(a)\Longleftrightarrow(b)\Longleftrightarrow(c)\Longleftrightarrow(d)\Longleftrightarrow(f)$ are true (and these form the content of the so called `Stefan-Sussmann Theorem').  The counter argument that was proposed by Stefan to refute Lobry's claim -- and that also works to refute Sussman's conditions \eqref{sussmannsconditions} -- is the following:
 \begin{example} \label{Arjen}
Let $M=\mathbb{R}^2$ and let $F$ be the family of vector fields containing all the vector fields of the form:
\begin{equation}
f(x,y)\,\frac{\partial}{\partial x}+g(x,y)\,\frac{\partial}{\partial y}
\end{equation}
for some functions $f,g\in\mathcal{C}^\infty(\mathbb{R}^2)$ such that $g\equiv0$ in some neighborhood of $(0,0)$. The family $F$ is actually a $\mathcal{C}^\infty(\mathbb{R}^2)$-module, that is locally of finite type \cite{stefan1980, balan}. However the induced distribution $\mathcal{D}^F$ turns out to be:
\begin{equation*}
\mathcal{D}_{(x,y)}=
\begin{cases}T_{(x,y)}\mathbb{R}^{2} \hspace{0.8cm} \text{for}\quad (x,y)\neq(0,0)\\
\mathrm{Span}\big(\frac{\partial}{\partial x}\big) \hspace{0.55cm} \text{for}\quad (x,y)=(0,0)
\end{cases}
\end{equation*}
which is obviously not integrable at the origin. Stefan noticed that the space of sections of $\mathcal{D}^F$ was not of the finite type, though. That is why he conjectured in 1974 that if the space of sections of a distribution is locally of finite type, then the distribution is integrable \cite{Stefanofficiel}. 
 \end{example}
Following Stefan refutations, Lobry published a public erratum  \cite{Lobry70erratum}. Even if he was wrong on this precise point, he nonetheless has to be acknowledged for the insight that led to the breakthrough of Stefan and Sussmann. Lobry was indeed the first to show that the flows of the vector fields are a crucial tool to prove integrability.  More precisely, his lemma 1.2.1 in \cite{Lobry70} which he mistakenly attributed to Hermann, and which was later refuted by Stefan \cite{Stefanofficiel}, was implicitly providing the condition for the distribution $\mathcal{D}^F$ to be integrable: it has to be $F$-invariant. 

This is precisely the content of the theorems of Stefan and Sussmann in their subsequent papers \cite{Stefanofficiel, Sussmannofficiel} that were submitted independently in 1972 and 1973, respectively. 
Both Stefan and Sussmann brought the discussion to another level because they did not rely on Lie algebras of globally defined  vector fields anymore as in Hermann's and Nagano's papers, but they allowed $F$ to be a mere family of vector fields that may be locally defined. Their `tour de force' was then to circumvent Chevalley's construction of a refined topology adapted to the integral manifolds of $\mathcal{D}^F$. With different notations, Stefan and Sussmann presented similar results that could be reformulated as follows:
\begin{theoreme}
\emph{\textbf{Stefan-Sussmann (1973)}} \label{theosussmann}
Let $M$ be a smooth manifold and let $\mathcal{D}$ be a smooth distribution. Then $\mathcal{D}$ is integrable if and only if it is generated by a family $F$ of smooth vector fields, and is invariant with respect to $F$.
\end{theoreme}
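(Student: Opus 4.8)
The plan is to prove the two implications separately, taking the forward (necessity) direction first because it is essentially a consequence of the leaf structure already discussed, and then tackling the much harder converse.

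For the direction \emph{integrable $\Rightarrow$ $F$-invariantly generated}, I would take $F=\Gamma(\mathcal{D})$, the sheaf of all smooth local sections of $\mathcal{D}$. Smoothness of $\mathcal{D}$ immediately gives $\mathcal{D}^F=\mathcal{D}$, so $F$ generates $\mathcal{D}$. The key point is then that every $X\in F$ has a flow that preserves the leaves: since $X$ is everywhere tangent to $\mathcal{D}$, the integral curve $t\mapsto\phi^X_t(y)$ has velocity $X(\phi^X_t(y))\in\mathcal{D}_{\phi^X_t(y)}=T_{\phi^X_t(y)}L_{\phi^X_t(y)}$, and since the leaves partition $M$ this curve cannot pass from one leaf to another, so $\phi^X_t(y)\in L_y$ for all admissible $t$. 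Applying the same observation at every point of $L_y$ shows $\phi^X_t$ maps $L_y$ into itself; as the leaves are weakly embedded immersed submanifolds, the restriction $\phi^X_t|_{L_y}$ is a diffeomorphism of $L_y$ with inverse $\phi^X_{-t}|_{L_y}$. Differentiating at $y$ then yields $(\phi^X_t)_\ast(\mathcal{D}_y)=(\phi^X_t)_\ast(T_yL_y)=T_{\phi^X_t(y)}L_y=\mathcal{D}_{\phi^X_t(y)}$, which is exactly the required $F$-invariance.

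For the converse \emph{$F$-invariantly generated $\Rightarrow$ integrable}, which is the substantial part, I would take as candidate leaf through $x$ the $F$-orbit $O^F_x$ and aim to show it is a maximal integral manifold. The heart of the argument is the identity $T_yO^F_x=\mathcal{D}_y$ for every $y\in O^F_x$. One inclusion is cheap: the orbit contains the integral curve of every $X\in F$ through $y$, so $X(y)\in T_yO^F_x$ and hence $\mathcal{D}^F_y=\mathcal{D}_y\subset T_yO^F_x$. For the reverse inclusion I would use that the tangent space to the orbit is spanned by the pushforwards $\phi_\ast X$ of elements $X\in F$ along diffeomorphisms $\phi\in G^F$ carrying a point $z$ to $y$; the decisive step is that $F$-invariance propagates from single flows to all of $G^F$ (every element of $G^F$ being a composition of flows $\phi^{X_i}_{t_i}$), giving $\phi_\ast(\mathcal{D}_z)\subset\mathcal{D}_{\phi(z)}$ for all $\phi\in G^F$. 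Thus each generator $\phi_\ast X(z)$ already lies in $\mathcal{D}_y$, forcing $T_yO^F_x\subset\mathcal{D}_y$ and hence equality. Maximality then follows because any connected integral manifold $N$ through $x$ has every element of $F$ tangent to it along $N$, so the flows of $F$ keep $N$ inside a single orbit and $N\subset O^F_x$; together with the fact that the orbits partition $M$, this exhibits $\mathcal{D}$ as integrable with leaves $\{O^F_x\}$.

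The main obstacle — and the reason Stefan's and Sussmann's papers are long — is not the tangent-space identity above but the prior step of equipping each orbit $O^F_x$ with the structure of a smooth immersed submanifold of locally constant dimension, so that the symbol $T_yO^F_x$ even makes sense. Here I would select vector fields $X_1,\ldots,X_k\in F$ for which the map $(t_1,\ldots,t_k)\mapsto\phi^{X_k}_{t_k}\circ\cdots\circ\phi^{X_1}_{t_1}(x)$ is an immersion near the origin, use such broken-flow maps as charts, and check their compatibility along the orbit using the $G^F$-action. Because $F$ need not be a vector space, Chevalley's exponential-patch construction is unavailable and these charts must be built and glued by hand; verifying that they endow each orbit with a second-countable smooth structure whose inclusion into $M$ is an immersion is exactly the delicate bookkeeping I expect to dominate the proof.
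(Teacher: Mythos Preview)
The paper does not actually give a proof of this theorem; it is a historical survey that states the result, discusses its context and publication history, and refers the reader to \cite{Stefanofficiel,Sussmannofficiel} for the arguments. That said, your outline matches precisely the proof strategy the paper attributes to Stefan and Sussmann: the candidate leaves are the $F$-orbits; $F$-invariance is exactly what forces $\mathcal{D}^F_y=T_yO^F_x$ (the paper states this explicitly in Section~\ref{background}, and your propagation of invariance from single flows to all of $G^F$ is the right mechanism); and the genuine difficulty --- which the paper calls their ``tour de force'' --- is equipping each orbit with a smooth-manifold structure without access to Chevalley's exponential construction, precisely because $F$ need not be a vector space. Your identification of broken-flow maps $(t_1,\ldots,t_k)\mapsto\phi^{X_k}_{t_k}\circ\cdots\circ\phi^{X_1}_{t_1}(x)$ as the charts and of the gluing bookkeeping as the dominant labor is accurate and is essentially what the paper singles out as the reason the original proofs are tedious.
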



Historically, it was Sussmann who first published this result in a short note without proofs in January 1973 in the Bulletins of the American Mathematical Society \cite{sussmanntrailer}. During the same year, his seminal paper was published in June in the Transactions of the American Mathematical Society \cite{Sussmannofficiel}. Both were submitted in June 1972. On the other hand, Stefan submitted his own article in July 1973 to the Proceedings of the London Mathematical Society \cite{Stefanofficiel}, but it was only published in December 1974. To claim his result a little bit faster, he submitted a short note to the Bulletins of the American Mathematical Society in March 1974 \cite{stefantrailer}, that was actually published in November 1974. These two papers are a condensate of the work he has done during his PhD, that he defended in December 1973 at the University of Warwick \cite{stefanphd}. However, it seems that Stefan was not aware of Sussmann's work before June 1973, as he says explicitely in \cite{stefantrailer}, and as he emphasizes in the introduction of his paper \cite{Stefanofficiel} that the draft was already written when he heard about Sussmann's papers. This, the deep understanding of Stefan on the questions of integrability, together with the dissemblance of Stefan's and Sussmann's notations and formalism, exclude any suspicion of plagiarism.

\begin{example}
To illustrate this theorem, let us go back to Example \ref{ex:nagano}, where a non-integrable distribution has been presented. Indeed, let $F$ be any family of vector fields that generates $\mathcal{D}$, and let $u=(0,y)$ be any point of the vertical axis. Then by definition we know that there is a vector field $X\in F$ which is defined in a neighborhood $U$ of $u$ and such that $X(u)=\frac{\partial}{\partial x}$. Then one can always push forward the distribution $\mathcal{D}_v=T_v\mathbb{R}^2$, for some $v\in U\cap \{(x,y)\,|\,x>0\}$, to the left half-space, using the flow of $-X$. But on the left half-space, the distribution is one-dimensional, hence  $\mathcal{D}$ is indeed not $F$-invariant. As expected it is not integrable either.
\end{example}

The formulation of Theorem \ref{theosussmann} is rather satisfying because it is a direct analogue of Frobenius' theorem, having the advantage of making evident the condition for integrability in terms of families of vector fields. 
In the case that the distribution $\mathcal{D}^F$ is not integrable, the $F$-orbits still do exist and are submanifolds or $M$. Stefan and Sussmann characterized the tangent space of these orbits as the smallest distribution containing $\mathcal{D}^F$ and that is $F$-invariant, see Theorem 4.1 in \cite{Sussmannofficiel} and Theorem 1 in \cite{stefan1980}, where Stefan finally adopted Sussmann's notations. This is the content of the well known \emph{Orbit Theorem} in modern day control theory \cite{jurdjevic, bullo, Agrachev}. In this field, the original statements of Stefan and Sussmann have been slightly modified to obtain a more convenient formulation adapted to control theorists' needs. The Orbit Theorem is usually attributed to Nagano and Hermann, or Nagano and Sussmann -- because control theorists often work in the analytic context. Then they usually drop the $F$-invariance which is no longer necessary in the analytic case, in favor of involutivity. This could be a bit confusing for someone who is exterior to the field. Another important remark is that the original articles of Stefan and Sussmann are very difficult to go through, either because the notations are unusual (in Stefan \cite{Stefanofficiel}), or because the proof is very tedious (in Sussman \cite{Sussmannofficiel}). For all these reasons, the result of Stefan and Sussmann has not been fully acknowledged, adding more confusion for those who are not specialists of the field.

\section{Further developments}\label{stefan}

After the publication of the groundbreaking results of Stefan and Sussmann, the research on the problem of integrating distributions into foliations did not stop. Stefan actually presented the deepest understanding of the questions of integrability, that is why he proposed other approaches to the problem.  In particular, in 1980 \cite{stefan1980} he introduced the notion of local subintegrability: we say that a set of vector fields $F$ is \emph{locally subintegrable} if for every $x\in M$ there exists a finite subset $F'\subset F$ and an open neighborhood $U$ of $x$ such that:
\begin{enumerate}
\item $\mathcal{D}^{F'}_x=\mathcal{D}^F_x$,
\item $\mathcal{D}^{F'}$ is integrable on $U$,
\item for every $X\in F$, there exists $\epsilon>0$ such that $\big(\phi^X_t\big)_*\big(\mathcal{D}^{F'}_x\big)=\mathcal{D}^{F'}_{\phi^X_t(x)}$ for every $|t|<\epsilon$.
\end{enumerate}
Then, in Theorem 4 in \cite{stefan1980}, Stefan claims that, given a family of vector fields $F$, the distribution $\mathcal{D}^F$ is integrable if and only if the $\mathcal{C}^\infty(M)$-module generated by $F$, that we denote by $F^\#$, is locally subintegrable. It turns out that this claim is wrong, as Example \ref{Arjen} refutes it. This was first noted by Balan in 1994 in \cite{balan}, where he pointed out that even if Lemma 6.1 in \cite{stefan1980} is always true, Lemma 6.2 is wrong, hence implying that Theorem 4 is wrong. A careful analysis shows however that the mistake described by Balan  in Lemma 6.2 vanishes if, instead of $F^\#$, one considers the sheaf $\Gamma\big(\mathcal{D}^F\big)$ of smooth sections of $\mathcal{D}^F$. 
Under this condition, the proof of Theorem 4 in \cite{stefan1980} is true, implying the result that, given a family of vector fields $F$, the induced distribution $\mathcal{D}^F$ is integrable if $\Gamma(\mathcal{D}^F)$ is locally subintegrable.

Stefan proved additionally in Proposition 6.3 in \cite{stefan1980} that his local subintegrability conditions could be written with a formulation that is very close to Lobry's and Sussmann's conditions, Equations \eqref{lobrysconditions} and \eqref{sussmannsconditions} respectively. With all these results combined together, we have:

\begin{theoreme}\emph{\textbf{Stefan (1980)}}\label{Stefantheo}
Let $M$ be a smooth manifold and let $\mathcal{D}$ be a smooth distribution on $M$. Then the following conditions are equivalent:
\begin{enumerate}
\item $\mathcal{D}$ is integrable,
\item $\Gamma(\mathcal{D})$ is locally subintegrable,
\item for every $x\in M$, there exist a family of vector fields $X_1,\ldots,X_p\in\Gamma(\mathcal{D})$ defined on an open neighborhood $U$ of $x$ such that:
\begin{itemize}
\item $\mathcal{D}_x=\mathrm{Span}\big(X_1,\ldots,X_p\big)$,
\item there exist smooth functions $f_{ij}{}^k\in\mathcal{C}^\infty(U)$ such that:
\begin{equation}
[X_i,X_j](y)=\sum_{1\leq k\leq p}\ f_{ij}{}^k(y)\, X_k(y),
\end{equation}
for every $y\in U$, and every $1\leq i,j\leq p$,
\item for every $X\in\Gamma(\mathcal{D})$, there exists $\epsilon>0$ and functions $g_{ij}\in\mathcal{C}^\infty\big(\left]-\epsilon,\epsilon\right[\big)$ such that:
\begin{equation}
\big[X,X_i\big]\big(\phi^X_t(x)\big)=\sum_{j=1}^p\,g_{ij}(t)\, X_j\big(\phi^X_t(x)\big)
\end{equation}
for every $t\in\left]-\epsilon,\epsilon\right[$.
\end{itemize}
\end{enumerate}
\end{theoreme}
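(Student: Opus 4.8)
The plan is to prove the three-way equivalence by funnelling every implication through the single notion of $\Gamma(\mathcal{D})$-invariance, using the already established Stefan--Sussmann theorem (Theorem \ref{theosussmann}) as the engine that converts invariance into integrability. I would first dispatch the two implications that produce $(1)$. Both $(2)$ and $(3)$ secretly encode that $\mathcal{D}$ is invariant under the flows of its own sections. In $(3)$ the relation $[X,X_i]\big(\phi^X_t(x)\big)=\sum_j g_{ij}(t)\,X_j\big(\phi^X_t(x)\big)$ is exactly the matricial linear differential equation appearing in the proof of Hermann's Theorem \ref{theohermann}: setting $Y_i(t)=(\phi^X_{-t})_\ast\big(X_i(\phi^X_t(x))\big)\in T_xM$ one obtains $\dot{Y}_i(t)=\sum_j g_{ij}(t)\,Y_j(t)$, whose fundamental solution is invertible, so $\mathrm{Span}\big(X_1(\phi^X_t(x)),\ldots,X_p(\phi^X_t(x))\big)=(\phi^X_t)_\ast\mathcal{D}_x$; since each $X_j\in\Gamma(\mathcal{D})$ this span lies in $\mathcal{D}_{\phi^X_t(x)}$, giving $(\phi^X_t)_\ast\mathcal{D}_x\subseteq\mathcal{D}_{\phi^X_t(x)}$. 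The same inclusion falls out of $(2)$ by combining its first and third clauses: $(\phi^X_t)_\ast\mathcal{D}_x=(\phi^X_t)_\ast\mathcal{D}^{F'}_x=\mathcal{D}^{F'}_{\phi^X_t(x)}\subseteq\mathcal{D}_{\phi^X_t(x)}$, using $F'\subseteq\Gamma(\mathcal{D})$ so that $\mathcal{D}^{F'}\subseteq\mathcal{D}$. Since both clauses are quantified over every $x\in M$, these infinitesimal inclusions hold at every point, and a compactness argument on a compact time interval together with the composition law for flows promotes them to full $\Gamma(\mathcal{D})$-invariance; Theorem \ref{theosussmann} then yields integrability. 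This settles $(2)\Rightarrow(1)$ and $(3)\Rightarrow(1)$.

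For $(1)\Rightarrow(2)$ I would read off the data from the leaf structure. If $\mathcal{D}$ is integrable then every section of $\mathcal{D}$ is tangent to the leaves, so $\mathcal{D}$ is automatically $\Gamma(\mathcal{D})$-invariant, which supplies the third subintegrability clause for small $t$. The one non-formal ingredient is the existence of Stefan's distinguished charts adapted to the foliation: around $x$, with $p=\dim L_x$, choose coordinates $(u,v)\in\mathbb{R}^p\times\mathbb{R}^{n-p}$ whose slices $\{v=c\}$ each lie inside a single leaf and with the plaque through $x$ equal to $\{v=0\}$. Taking $F'=\{\partial/\partial u^1,\ldots,\partial/\partial u^p\}$, these commuting fields are sections of $\mathcal{D}$ (each slice being a $p$-dimensional submanifold of its leaf, so $\partial/\partial u^i\in\mathcal{D}$), they span $\mathcal{D}_x=T_x\{v=0\}$, and $\mathcal{D}^{F'}$ is the coordinate distribution, manifestly integrable on the chart domain with the slices as integral manifolds. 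This verifies the three defining clauses of local subintegrability for $\Gamma(\mathcal{D})$.

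It remains to establish $(2)\Leftrightarrow(3)$, which is Stefan's Proposition 6.3 and is essentially a change of wording. Given a finite $F'$ spanning $\mathcal{D}_x$ as in $(2)$, the generators stay independent near $x$, so $\mathcal{D}^{F'}$ has constant rank there; by Frobenius' theorem its integrability on $U$ is equivalent to involutivity, that is, to the existence of the structure functions $f_{ij}{}^k$ of the second bullet of $(3)$. The third bullet of $(3)$ is, through the very Lie-derivative computation of the first paragraph, the infinitesimal form of the flow-invariance clause of subintegrability, the two being interchangeable by solving the linear system for the $g_{ij}$. The first bullet of $(3)$ coincides with clause $1$ of subintegrability verbatim. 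Together, $(1)\Rightarrow(2)$, $(2)\Rightarrow(1)$ and $(2)\Leftrightarrow(3)$ give all three equivalences.

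The step I expect to be genuinely delicate is not any isolated computation but the discipline of working throughout with the sheaf $\Gamma(\mathcal{D})$ rather than with the $\mathcal{C}^\infty(M)$-module $F^\#$ generated by some presenting family. This is exactly the point at which Stefan's original Lemma 6.2, and with it his Theorem 4, collapsed, as Example \ref{Arjen} makes plain: there $F^\#$ is locally of finite type, yet $\mathcal{D}^F$ is not integrable, because the finite sub-families of $F^\#$ spanning the fibre over the origin do not reproduce $\mathcal{D}^F$ on an entire plaque. Passing to $\Gamma(\mathcal{D})$ is what repairs this, since the coordinate fields extracted from a distinguished chart genuinely belong to $\Gamma(\mathcal{D})$ and agree with $\mathcal{D}$ along the plaque, so that the local integral manifolds of $\mathcal{D}^{F'}$ are honest pieces of leaves. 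The real work is to confirm that this single substitution rescues the argument while never covertly reinstating a finite-generation hypothesis that $\Gamma(\mathcal{D})$ is not assumed to satisfy.
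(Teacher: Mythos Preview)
Your proposal is correct and follows essentially the same architecture as the paper: the paper establishes $1\Rightarrow 2,3$ as ``easy'', invokes Stefan's Proposition 6.3 for $2\Leftrightarrow 3$, and obtains $2\Rightarrow 1$ from Stefan's Theorem 4 applied to $\Gamma(\mathcal{D})$, which is precisely your route of extracting $\Gamma(\mathcal{D})$-invariance from the matricial ODE (Stefan's Lemma 6.1) and then feeding it into Theorem \ref{theosussmann}. Your use of distinguished charts for $1\Rightarrow 2$ and your emphasis on why $\Gamma(\mathcal{D})$ rather than $F^\#$ is the correct object are exactly the points the paper makes, only spelled out in more detail than the paper's terse citation-based proof.
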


It is easy to show that 1.  implies 2. and 3. The equivalence $2.\Longleftrightarrow3.$ is Proposition 6.3 in \cite{stefan1980}, whereas the implication $2.\Longrightarrow1.$ comes from Theorem 4 in \cite{stefan1980}, when applied to $\Gamma(\mathcal{D})$. Notice that Lobry's conditions \eqref{lobrysconditions} -- when applied to $\Gamma(\mathcal{D})$ -- imply item 3. This proves that Lobry's conditions are sufficient conditions for integrability, if one consider the space of sections of the distribution, and not any generating set of vector fields. Interestingly, this corollary was stated by Stefan in his 1974 paper \cite{Stefanofficiel}, but he nevertheless chose the $\mathcal{C}^\infty(M)$-module $F^\#$ to prove Theorem 4 in his 1980 paper \cite{stefan1980}, which was not a conclusive choice. Notice also that item 3. finally provides the local description of integrable smooth distributions that were sought for years, through the work of Hermann, Lobry, and Sussmann. Item 3. is a condition that is stronger that Sussmann's but weaker than Lobry's.

\bigskip
Independently of this discussion, Balan 
  proposed an alternative formulation of Stefan's local subintegrability conditions that would be valid for any $\mathcal{C}^\infty(M)$-module of vector fields \cite{balan}. In the following the family $F$ will hence be considered as carrying a $\mathcal{C}^\infty(M)$-module structure. He understood that the flaw of the condition that a family  $F\subset\mathfrak{X}(M)$ be locally of finite type is that, given a vector field $X\in F$, the open set $U$ on which the bracket with $X$ is defined depends on $X$. The same argument applies to Sussmann's conditions \eqref{sussmannsconditions}, where $\epsilon$ depends on $X$. This is precisely the reason why Example \ref{Arjen} works so well.  

That is why Balan proposed to modify Sussmann's conditions into a stronger one, where $\epsilon$ does not depend on the choice of the vector field $X\in F$. Balan encoded this condition not directly with a parameter $\epsilon$, but with an indirect way, with the help of some open subset.
More precisely Balan's integrability conditions can be stated as follows: for any $x\in M$, there exist $X_1,\ldots,X_p\in F$ and some open set $U\subset M$ such that 
$X_1(x),\ldots,X_p(x)$ is a basis of $\mathcal{D}^F_x$, and that for every $X\in F$, there exist smooth functions $(g_{ij})_{1\leq i,j\leq p}\in\mathcal{C}^\infty\big(\left]-\mu_X,\mu_X\right[\big)$ such that:
\begin{equation}\label{Balan}
\big[X,X_i\big]\big(\phi^X_t(x)\big)=\sum_{j=1}^p\,g_{ij}(t)\, X_j\big(\phi^X_t(x)\big)
\end{equation}
for every $t\in\left]-\mu_X,\mu_X\right[$, where $\mu_X=\mathrm{sup}\big\{s\,|\,\phi^X_t(x)\in U \text{ for all }|t|<s\,\big\}$. Then, in Theorem 2.1 in \cite{balan}, he claims that the distribution $\mathcal{D}^F$ is integrable if and only if the $\mathcal{C}^\infty(M)$-module $F$ satisfies his conditions.

First a few remarks. The formulation of Balan is a bit overdetermined (there is an additional parameter $\epsilon$ in the original formulation of Theorem 2.1 that does not play any role). The main point of his conditions is that for any $X\in F$, the bracket $[X,X_i]$ satisfies Equation \eqref{Balan} on the entire integral curve of $X$ that is contained in $U$. In other words, Balan does not want that Equation \eqref{Balan} be only satisfied on a small part of the integral curve around $x$ that depends on the choice of the vector field $X$. Thus, the role of the open set $U$ in Balan's conditions is to enforce that Equation \eqref{Balan} is satisfied on all the integral curves passing though $x$ and that are defined on a small neighborhood of $x$. Also, contrary to Stefan's integrability conditions presented in Theorem \ref{Stefantheo} where the entire space of sections of the distribution $\mathcal{D}$ had to be taken into account, here we only consider a family of vector fields $F$, as in Theorem \ref{theosussmann}.
 
 Thus Balan's theorem 2.1 in \cite{balan} can be stated as follows:
 
\begin{theoreme}\emph{\textbf{Balan (1994)}}
Let $M$ be a smooth manifold, and let $F\subset\mathfrak{X}(M)$ be a $\mathcal{C}^\infty(M)$-module of vector fields. Then the induced distribution $\mathcal{D}^F$ is integrable if and only if $F$ satisfies Balan's integrability conditions \eqref{Balan}.
\end{theoreme}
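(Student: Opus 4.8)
The plan is to prove both implications by reducing to the Stefan-Sussmann criterion (Theorem~\ref{theosussmann}): since $\mathcal{D}^F$ is by construction generated by the family $F$, it is integrable if and only if it is $F$-invariant. Thus in each direction it suffices to relate Balan's bracket conditions \eqref{Balan} to the invariance property $(\phi^X_t)_\ast\big(\mathcal{D}^F_y\big)\subset\mathcal{D}^F_{\phi^X_t(y)}$.

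For the implication ``conditions $\Rightarrow$ integrability'', I would fix $x\in M$ and $X\in F$, and take $X_1,\dots,X_p\in F$ and $U$ as in Balan's conditions. The idea is to transport the frame back along the flow by setting $Y_i(t)=\big(\phi^X_{-t}\big)_\ast X_i\big(\phi^X_t(x)\big)\in T_xM$, so that $Y_i(0)=X_i(x)$ is a basis of $\mathcal{D}^F_x$. The standard Lie-derivative identity $\tfrac{\dd}{\dd t}\big(\phi^X_{-t}\big)_\ast X_i=\big(\phi^X_{-t}\big)_\ast[X,X_i]$, combined with \eqref{Balan}, turns the evolution of the $Y_i$ into the linear system
\begin{equation*}
\frac{\dd}{\dd t}Y_i(t)=\sum_{j=1}^p g_{ij}(t)\,Y_j(t),\qquad |t|<\mu_X,
\end{equation*}
in the fixed vector space $T_xM$. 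The fundamental matrix of this system lies in $GL_p(\mathbb{R})$, so the frame $\big(Y_1(t),\dots,Y_p(t)\big)$ is obtained from the initial frame by an invertible linear transformation; hence $\mathrm{Span}\big(Y_1(t),\dots,Y_p(t)\big)=\mathcal{D}^F_x$ for all $|t|<\mu_X$. Pushing this span forward by $\phi^X_t$ and using $X_i\in F$ (so that $X_i\big(\phi^X_t(x)\big)\in\mathcal{D}^F_{\phi^X_t(x)}$) yields $\big(\phi^X_t\big)_\ast\mathcal{D}^F_x\subset\mathcal{D}^F_{\phi^X_t(x)}$ as long as the integral curve remains in $U$.

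It remains to upgrade this local statement, valid only for $|t|<\mu_X$, to $F$-invariance along the entire integral curve, and this is where I expect the main difficulty. It is precisely the point the uniform open set $U$ is designed to address: because $\mu_X$ is governed by the geometric event of the curve leaving a single set $U$ (rather than by an $X$-dependent $\epsilon$, which Example~\ref{Arjen} shows may conceal the obstruction), one can re-apply Balan's conditions at a new base point $\phi^X_s(x)$ chosen near the exit time and compose the resulting inclusions. A connectedness argument on the interval of definition of $t\mapsto\phi^X_t(x)$ then propagates the inclusion to all admissible $t$; letting $x$ and $X$ vary gives full $F$-invariance, and Theorem~\ref{theosussmann} delivers integrability.

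For the converse ``integrability $\Rightarrow$ conditions'', I would use that an integrable $\mathcal{D}^F$ possesses, through each $x$, a leaf $L_x$ which is an integral manifold, so $\dim\mathcal{D}^F_y=\dim T_yL_x$ is constant $(=p)$ along $L_x$. Choosing $X_1,\dots,X_p\in F$ with $X_1(x),\dots,X_p(x)$ a basis of $\mathcal{D}^F_x$ and letting $U$ be a neighborhood on which these fields remain pointwise independent (an open condition), I observe that the integral curve of any $X\in F$ issued from $x$ stays in $L_x$, since $X$ is tangent to $\mathcal{D}^F=TL$. Along this curve the $X_i$ are independent and lie in the $p$-dimensional space $\mathcal{D}^F_{\phi^X_t(x)}$, hence span it; and since $[X,X_i]$ is a bracket of vector fields tangent to $L_x$, it too is tangent to $L_x$ and therefore lies in that same span. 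Solving for the coefficients by Cramer's rule produces smooth functions $g_{ij}(t)$ verifying \eqref{Balan}, which is exactly Balan's condition at $x$ with the chosen $U$. Assembling the two implications completes the proof.
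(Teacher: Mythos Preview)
Your proof is correct but takes a genuinely different route from the paper. The paper follows Balan's original strategy, which is modelled on Nagano's proof: one constructs the integral manifold $N_x$ directly as the image of the exponential map on a small ball in a subspace $F(x)\subset F$ spanning $\mathcal{D}^F_x$, and the work goes into showing that $N_x$ really is an integral manifold. Nagano's argument used analyticity at the key step; Balan replaces that step by his bracket condition, isolated as Lemma~3.4 in \cite{balan}. The ``only if'' part is then obtained from the second item of that same lemma.

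You, by contrast, treat Stefan--Sussmann (Theorem~\ref{theosussmann}) as a black box and reduce everything to checking $F$-invariance. Your ODE computation for the pulled-back frame $Y_i(t)$ is exactly right, and your propagation argument---re-applying Balan's condition at successive base points and composing the inclusions $(\phi^X_r)_\ast\big((\phi^X_s)_\ast\mathcal{D}^F_x\big)\subset(\phi^X_r)_\ast\mathcal{D}^F_{\phi^X_s(x)}\subset\mathcal{D}^F_{\phi^X_{r+s}(x)}$---goes through by a standard compactness cover of $[0,T]$. For the converse, your observation that integrability forces $\Gamma(\mathcal{D}^F)$ to be involutive, so that $[X,X_i]$ lies in the span of the frame along the leaf, is the clean way to produce the $g_{ij}$. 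Your approach is shorter and more transparent once Theorem~\ref{theosussmann} is available; the paper's approach is more self-contained and makes visible the structural parallel with Nagano's analytic argument.
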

\begin{proof} The proof  is essentially an adaptation of Nagano's proof of integrability \cite{Nagano}, using the same splitting $F(x)\oplus\mathcal{G}(x)$ of the family $F$ at the point $x$. Let $B_\epsilon$ be some ball of radius $\epsilon$ centered at the origin in $T(x)$, then the exponential map $\mathrm{exp}:B_\epsilon\to M$ defines an embedded submanifold $N_x$ of $M$. The core of Nagano's proof is that $\big[F(x),F(x)\big]$ is zero on $N_x$. Since $\big[F(x),F(x)\big]\subset\mathcal{G}(x)$, the result is proven by showing that $\mathcal{G}(x)$ vanishes on $N_x$, which is done by using the analyticity of the objects. In Balan's paper, one has to use a different argument since the objects are not analytic anymore. To do this, Balan singles out the condition of integrability in Lemma 3.4 in \cite{balan}. In Nagano's proof, item 2. is a consequence of analyticity, and item 1. is a consequence of item 2. In the smooth case, Balan uses his conditions to show item 1., which is then necessary to prove item 2. But this last item is not necessary to prove that $N_x$ is an integral manifold, since Equation (1.5) in Nagano's proof only require the validity of item 1.
To show the first item, Balan assumes without loss of generality that there exists a set of vector fields that satisfies his conditions and also that induces a splitting in the sense of Nagano. He then uses this particular basis to show both the first item and then the second item of Lemma 3.4 in \cite{balan}. The notations used to describe the differential equation are not very clear, and one can find a clearer presentation in Lemma 6.1 in Sussmann's paper \cite{Sussmannofficiel}. Finally, the second item of Lemma 3.4 in \cite{balan} is necessary to prove the `only if' part of the theorem, that is: if $\mathcal{D}^F$ is integrable, then Balan's conditions \eqref{Balan} are satisfied. \end{proof}

\section{Conclusion}\label{conclusion}

We have shown in the preceding three sections that the road to a definitive answer on the issue  of integrating distributions reveals itself incredibly flourishing and twisting from the 1970s on. There are many results that turned to be wrong, and many claims whose proof are inconclusive. To this day, the main theorems that have been proven are: Nagano's theorem and Hermann's theorem in Section \ref{nagano}, Stefan-Sussmann's theorem in Section \ref{lobry}, and Stefan's theorem and Balan's theorem in Section \ref{stefan}. 
It is interesting that they  involve different objects such as sub-Lie algebras of $\mathfrak{X}(M)$, spaces of sections of distributions, $\mathcal{C}^\infty(M)$-modules of vector fields, and even mere families of vector fields in Stefan-Sussmann's theorem. Mathematicians could now choose the theorem that is more adapted to their needs. In particular, in control theory and in geometry, the most popular theorems are Nagano's and Hermann's. 

 Hermann's influence on the field has to be emphasized as the founder of geometric control theory, and as the first one who gave results in the smooth and analytic cases, with a powerful argument: that the integral paths are the objects of interest when one attempts to integrate a distribution. 
In the same way, Stefan has to be acknowledged for his insights and his understanding of the topic that enabled him to produce many new results on the question of integrability, most notable the local characterization of integrable smooth distributions. This is all the more important, since he had a very short career before his tragic death while climbing mount Tryfan in 1978. 

Another achievement of Stefan is the characterization of the leaves of a singular foliation with respect to the  smooth structure on $M$ \cite{Stefanofficiel}, which is based on the usual definition of regular foliations \cite{Lee}. 
Any regular foliation is characterized by a \emph{foliated atlas}, which means that any of its charts is a saturated set: it is the union of disjoint connected submanifolds of a specific form that we call \emph{plaques}. In the singular case, the definition is slightly modified because transition functions cannot be defined in the same way as in the regular case. More precisely, given a smooth manifold $M$ and a foliation on it, we say that $M$ is equipped with a \emph{distinguished atlas} if for any point $x\in M$, there exist an open neighborhood $U$ and a diffeomorphism $\varphi:U\to V\times W$, such that:

\begin{enumerate}
\item $V\subset\mathbb{R}^p$ and $W\subset\mathbb{R}^{n-p}$ are open sets, where $p=\mathrm{dim}(L_x)$,
\item $\varphi(x)=(0,0)$,
\item for any leaf $L$, we have $\varphi(L\cap U)=V\times l_L$, with $l_L=\big\{y\in W\,|\,\varphi^{-1}(0,y)\subset L\big\}$.
\end{enumerate}
 Stefan made a definitive progress in foliation theory by showing that the orbits of a family of vector fields are indeed leaves of a foliation that satisfies his criteria \cite{Stefanofficiel}.


Now addressing the persistent claim that Balan allegedly corrected Stefan-Sussmann's theorem, we saw in Section \ref{lobry} that Stefan created the counter-example \ref{Arjen} to refute Lobry's conditions of integrability, and in Section \ref{stefan}, we saw that Balan used this counter-example to refute both the claim that Sussmann's conditions are sufficient for integrability, and the claim that Stefan's local integrability is a sufficient condition as well. However we have seen that local integrability becomes a sufficient condition when one considers the space of sections of the distribution. Then Theorem 4 in \cite{stefan1980} becomes Theorem \ref{Stefantheo} above. Moreover, Stefan-Sussmann's theorem, as stated in Section \ref{lobry} and in most books of geometry and of control theory, is not impacted by Balan's observations. In view of all these arguments, there is no reason to further propagate the idea that Stefan-Sussmann's theorem is incomplete or even wrong, and that Balan corrected it. What is true is that Balan proved that two specific assumptions in two different papers were not sufficient for integrability, and provided his own view on the problem by proposing a new statement on integrability.

\bigskip
It is now time to turn to present-day geometry and talk about conventions. An important point is that a distribution in itself does not carry much informations because it can be generated by several sets of vector fields. A family of vector fields indeed carries more data, as can be shown on the following example:

\begin{example}The Lie algebras $\mathfrak{gl}(2)$, $\mathfrak{sl}(2)$ and $\mathbb{C}^*$ (seen as a real matrix algebra) act on $\mathbb{R}^2$ via their respective  different actions, but they induce the same integrable distribution. The corresponding foliation has two leaves: the point at the origin, and the punctured plane. It is shown in \cite{AndrouSkandal} that the holonomy groupoid corresponding to these various actions are drastically different.
\end{example}

 This example shows that a family of vector fields contains more informations than the distribution that it induces. The focus on the family of vector fields rather than on the distribution draws a link with the original motivation of geometric control theory: solving linear differential systems using tools from geometry, and considering that the vector fields are the main objects of interests. This is not a new idea because Nagano himself for example defines a linear differential system as the $\mathcal{C}^\infty(M)$-module generated by the sub-Lie algebra $F\subset\mathfrak{X}(M)$ \cite{Nagano}.

There has also been a shift in the kind of object that are manipulated. Today, some geometers are more accustomed to manipulate  modules or sheaves of vector fields than just families of vector fields as was typical of Stefan's and Sussmann's work.
In the field of Poisson geometry for example, there are various notions and definitions but they all rely on this module property. A sub-module of compactly supported vector fields that is locally finitely generated and involutive is called a \emph{singular foliation} in \cite{AndrouSkandal}, or a \emph{Stefan-Sussmann foliation} in \cite{AndrouZamb4}. A different formulation appears in \cite{thesis}: a \emph{Hermann foliation} is a sub-sheaf $\mathcal{F}:U\mapsto\mathcal{F}(U)$ of the sheaf of vector fields $\mathfrak{X}$ that is locally finitely generated and closed under Lie bracket. Here, we say that a sheaf $\mathcal{F}$ is \emph{locally finitely generated} if for any $x\in M$, there exists an open set $U$ such that $\mathcal{F}(U)$ is finitely generated as a $\mathcal{C}^\infty(U)$-module.

It has been shown that these two different notions are in one-to-one correspondence \cite{roy}.
Thus, it would be useful to find a common denomination for these objects that are equivalent, but bear different names. In any case Hermann's theorem implies that the distributions induced by either `Stefan-Sussmann foliations' or `Hermann foliations' are integrable. There is no need to use Stefan-Sussmann's theorem to show this result. As a historical note, in Hermann's original paper \cite{Hermann1962}, the sub-Lie algebras $F\subset\mathfrak{X}(M)$ that he is studying are called \emph{foliations with singularities}. Hence that would justify that one uses the term \emph{singular foliations} for the equivalent notions used in \cite{AndrouSkandal, AndrouZamb4, thesis}, as this term was originally used by Hermann to precisely designate those families of vector fields that are locally finitely generated and involutive.


\bibliographystyle{plainyr}
\bibliography{mabibliographie}

\end{document}